\newtheorem{assumption}{Assumption}
\newtheorem{theorem}{Theorem}
\newtheorem*{sassumption}{Standing Assumption}
\newtheorem{definition}{Definition}
\newtheorem{proposition}{Proposition}
\newtheorem{corollary}{Corollary}
\newcommand{\be}{\begin{equation}}
\newcommand{\ee}{\end{equation}}
\newcommand{\E}{\mathbb E}
\newcommand{\Real}[1]{ { {\mathbb R}^{#1} } }
\def\myspace{1.5mm}
\DeclareMathOperator*{\argmin}{arg\,min}
\newcommand{\md}{\mathbf{d}}
\newcommand{\xstar}{x^\star}
\newcommand{\xne}{x_{\rm NE}}
\newcommand{\sstar}{s^\star}
\newcommand{\mb}[1]{\mathbb{#1}}
\newcommand{\mc}[1]{\mathcal{#1}}
\newcommand{\N}{M}
\newcommand{\X}{\mc{X}}
\renewcommand{\j}{j}
\newcommand{\xj}{x^{\j}}
\newcommand{\Xj}{\X^{\j}}
\newcommand{\Xjdelta}{\X^{\j}_{\delta}}
\newcommand{\Xjdeltai}{\X^{\j}_{\delta_i}}
\newcommand{\n}{m}
\newcommand{\Jj}{J^{\j}}
\newcommand{\Jjmax}{J_{\rm max}^{\j}}
\newcommand{\xre}{x_{\rm R}}
\newcommand{\xsr}{x_{\rm SR}}
\newcommand{\tsr}{t_{\rm SR}}
\newcommand{\ysr}{y_{\rm SR}}
\newcommand{\noin}{\noindent}
\DeclareSymbolFont{bbold}{U}{bbold}{m}{n}
\DeclareSymbolFontAlphabet{\mathbbold}{bbold}
\newcommand{\vect}[1]{\mathbbold{#1}}
\newcommand{\zeros}[1]{\vect{0}_{#1}}
\newcommand{\ones}[1]{\vect{1}_{#1}}
\newcommand{\mcm}{\mc{\N}}%
\title{\LARGE \bf 
The scenario approach meets uncertain variational inequalities and game theory
}
\author{Dario Paccagnan and Marco C. Campi%
\thanks{This work was supported by the SNSF grant $\#$P2EZP2 181618 and by the University of Brescia under the project CLAFITE. 
D. Paccagnan is with the Mechanical Engineering Department and the Center of Control, Dynamical Systems and Computation, UC Santa Barbara, CA 93106-5070, USA.
M. Campi is with Dipartimento di Ingegneria dell'Informazione, Universit\`a di Brescia, Italy.
Email: {\tt dariop@ucsb.edu}, {\tt marco.campi@unibs.it}
}}
\begin{document}
\maketitle
\thispagestyle{empty}
\pagestyle{empty}

\begin{abstract}
Variational inequalities are modelling tools used to capture a variety of decision-making problems arising in mathematical optimization, operations research, game theory.
The scenario approach is a set of techniques developed to tackle stochastic optimization problems, take decisions based on historical data, and quantify their risk.
The overarching goal of this manuscript is to bridge these two areas of research, and thus broaden the class of problems amenable to be studied under the lens of the scenario approach.
First and foremost, we provide out-of-samples feasibility guarantees for the solution of variational and quasi variational inequality problems.
Second, we apply these results to two classes of uncertain games.
 In the first class, the uncertainty enters in the constraint sets, while in the second class the uncertainty enters in the cost functions.
Finally, we exemplify the quality and relevance of our bounds through numerical simulations on a demand-response model.

\end{abstract}

\section{Introduction}
\label{}

Variational inequalities are a very rich class of %
decision-making problems. %
They can be used, for example, to characterize the solution of a convex optimization program, or to capture the notion of saddle point in a min-max problem.
Variational inequalities can also be employed to describe complementarity conditions, nonlinear systems of equations, or equilibrium notions such as that of Nash or Wardrop equilibrium \cite{facchinei2007finite}. With respect to the applications, variational inequalities have been employed in countless fields,  including transportation networks, demand-response markets,  option pricing, \mbox{structural analysis, evolutionary biology \cite{dafermos1980traffic, gentile2017nash, jaillet1990variational, ferris2001limit, sandholm2010population}.}

Many of  these settings feature a non-negligible source of uncertainty, so that any 
	planned action
inevitably comes with a 
	degree of risk. 
While deterministic models have been widely used as a first order approximation, the increasing availability of raw data motivates the development of data-based techniques for decision-making problems, amongst which variational inequalities are an important class. 
As a concrete example, consider that of drivers moving on a road traffic network with the objective of reaching their destination as swiftly as possible.
Based on historical data%
, a given user would like to i) plan her route, and ii)  estimate how likely she is to reach the destination within a given time.

Towards this goal, it is natural to consider variational inequalities where the solution is required to be robust against a set of observed realizations of the uncertainty, as formalized next. %
Given a collection of sets $\{\mc{X}_{\delta_i}\}_{i=1}^N$, where $\{\delta_i\}_{i=1}^N$ are independent observations from the probability space $(\Delta,\mc{F},\mb{P})$, and given $F: \Real{n}\rightarrow \Real{n}$, we consider the following variational inequality problem:
\be%
\label{eq:mainproblem}
\text{find}~~\xstar\in\mc{X}\doteq \bigcap_{i=1}^N \mathcal{X}_{\delta_i} ~~\text{s.t.}~~ 
F(x^\star)^\top (x - x^\star) \ge 0~~
\forall x \in\mc{X}.
\ee
We assume no information is available on $\mb{P}$, and ask the following fundamental question: 
\emph{how likely is a solution of \eqref{eq:mainproblem} to be robust against unseen realizations?}

In this respect, our main objective is to provide probabilistic bounds on the feasibility of a solution to \eqref{eq:mainproblem}, while ensuring that such solution can be computed using a tractable algorithm.
While our results are de facto probabilistic feasibility statements,  we will show how to apply them to game theoretic models to e.g., quantify the probability of incurring a higher cost compared to what originally predicted.\footnote{This is the main reason to extend the results derived for \eqref{eq:mainproblem} to the richer class of quasi variational inequality problems, see \Cref{subsec:quasivi,sec:robustgames}.} 

\vspace*{\myspace}
\noin {\bf Related works.}
Two formulations are typically employed to incorporate uncertainty into variational inequality models \cite{shanbhag2013stochastic}. A first approach, termed \emph{expected-value formulation}, captures uncertainty arising in the corresponding operator $F$ in an average sense. Given $\mc{X}\subseteq\Real{n}$, $F:\mc{X}\times\Delta\rightarrow \Real{n}$, and a probability space $(\Delta,\mc{F},\mb{P})$, a solution to the expected-value variational inequality is an element $\xstar\in\mc{X}$ such that 
\be
\label{eq:expectedVI}
\E[F(\xstar,\delta)]^\top (x-\xstar)\ge0 \qquad\forall x\in\mc{X}.
\ee
Naturally, if the expectation can be easily evaluated, solving \eqref{eq:expectedVI} is no harder than solving a deterministic variational inequality, for which much is known (e.g., existence and uniqueness results, algorithms \cite{facchinei2007finite}). If this is not the case, one could employ sampling-based algorithms to compute an approximate solution of \eqref{eq:expectedVI}, see \cite{gurkan1999sample,jiang2008stochastic, yousefian2018stochastic}.

A second approach, which we refer to as the \emph{robust formulation}, is used to accommodate uncertainty both in the operator, and in the constraint sets. Consider the collection $\{\mc{X}_\delta\}_{\delta\in\Delta}$, where $\mc{X}_\delta\subseteq\Real{n}$, and let $\mc{X}\doteq\cap_{\delta\in\Delta} \mc{X}_{\delta}$. A solution to the robust variational inequality is an element $\xstar\in\mc{X}$ s.t.
\be
\label{eq:almostsureVI}
F(\xstar,\delta)^\top (x-\xstar)\ge0 \qquad\forall x\in\mc{X}, \quad \forall \delta \in\Delta.
\ee
It is worth noting that, even when the uncertainty enters only in $F$, %
a solution to \eqref{eq:almostsureVI} is unlikely to exists.\footnote{To understand this, consider the case when the variational inequality is used to describe the first order condition of a convex optimization program. Within this setting, \eqref{eq:almostsureVI} requires $x^\star$ to solve a \emph{family} of different optimization problems, one per each $\delta\in\Delta$. Thus, \eqref{eq:almostsureVI} only exceptionally has a solution.}
The above requirement is hence weakened employing a formulation termed \emph{expected residual minimization} (ERM), see \cite{chen2005expected}. Within this setting, given a probability space $(\Delta,\mc{F},\mb{P})$, a solution is defined as
$
x^\star\in\arg\min_{x\in\mc{X}}\E[\Phi(x,\delta)],
$
where $\Phi:\mc{X}\times\Delta\rightarrow\mb{R}$ is a residual function.\footnote{A function $\Phi:\mc{X}\times\Delta\rightarrow\mb{R}$ is a residual function if, $\Phi(x,\delta)=0$ whenever $x$ is a solution of \eqref{eq:almostsureVI} for given $\delta$, and $\Phi(x,\delta)>0$ elsewhere.}
In other words, we look for a point that satisfies \eqref{eq:almostsureVI} as best we can (measured through $\Phi$), on average over $\Delta$.
Sample-based algorithms for its approximate solution are derived in, e.g., \cite{chen2012stochastic,luo2009expected}.

While the subject of our studies, defined in \eqref{eq:mainproblem}, differs in form and spirit from that of \eqref{eq:expectedVI}, it can be regarded as connected to \eqref{eq:almostsureVI}. Indeed, our model can be thought of as a sampled version of \eqref{eq:almostsureVI}, where the uncertainty enters only in the constraints. 
In spite of that, our objectives significantly depart from that of the ERM formulation, as detailed next.

\vspace*{\myspace}
\noin {\bf Contributions.} 
The goal of this manuscript is that of \emph{quantifying the risk} associated with a solution of \eqref{eq:mainproblem} against unseen samples $\delta\in\Delta$, while ensuring that such solution can be computed tractably. Our main contributions are as follows.

\begin{enumerate}
\item[i)] We provide a-priori and a-posteriori bounds on the probability that the solution of \eqref{eq:mainproblem} remains feasible for unseen values of $\delta\in\Delta$ (out-of-sample guarantees).
\item[ii)]	We show that the bounds derived in i) hold for the broader class of \emph{quasi variational inequality} problems.

\item[iii)] We leverage the bounds obtained in i) to study Nash equilibrium problems with uncertain constraint sets.
\item[iv)] We employ the bounds derived in ii) to give concrete probabilistic guarantees on the performance of Nash equilibria, relative to games with uncertain payoffs, as originally defined by Aghassi and Bertsimas in \cite{aghassi2006robust}.
\item[v)] We consider a simple demand-response scheme %
and exemplify the applicability and quality of our probabilistic bounds through numerical simulations.
\end{enumerate}

Our results follow the same spirit of those derived within the so-called \emph{scenario approach}, where the sampled counterpart of a robust optimization program is considered, and the risk  %
associated to the corresponding solution is bounded in a probabilistic sense \cite{calafiore2005uncertain,campi2008exact,margellos2014road,esfahani2015performance,care2015scenario,care2018new,campi2018wait}. To the best of the authors' knowledge, our contribution is the first to enlarge the applicability of the scenario approach to the broader class of variational inequality problems.
While variational inequalities are used to model a wide spectrum of problems, we limit ourselves to discuss the impact of our results on the class of uncertain games, due to space considerations.

\vspace*{\myspace}
\noin {\bf Organization.}
In \Cref{sec:scenarioandvi} we introduce the main subject of our analysis, as well as some preliminary notions. \Cref{sec:mainresult} contains the main result and its extension to quasi variational inequalities.
In \Cref{sec:robustgames} we show the relevance of the bounds previously derived in connection to uncertain games.
In \Cref{sec:numerics} we test our results on a demand-response scheme through exhaustive numerical simulations.

\section{The scenario approach to variational inequalities}
\label{sec:scenarioandvi}
Motivated by the previous discussion, in the remainder of this paper we consider the variational inequality (VI) introduced in \eqref{eq:mainproblem} and reported in the following: 
\[
\text{find}~~\xstar\in\mc{X}\doteq \bigcap_{i=1}^N \mathcal{X}_{\delta_i} ~~\text{s.t.}~~ 
F(x^\star)^\top (x - x^\star) \ge 0~~
\forall x \in\mc{X},
\]
where $F:\Real{n}\rightarrow\Real{n}$ and $\mc{X}_{\delta_i}\subseteq\Real{n}$ for $i\in\{1,\dots,N\}$ are elements of a family of sets $\{\mc{X}_\delta\}_{\delta\in\Delta}$. 
Throughout the presentation we assume that $\{\delta_i\}_{i=1}^N$ are independent samples from the probability space $(\Delta,\mc{F},\mb{P})$, though no 
	knowledge is assumed on $\mb{P}$.
In order to provide out-of-sample guarantees on the feasibility of a solution to \eqref{eq:mainproblem}, we begin by introducing two concepts that play a key role: the notion of risk and that of support constraint.

\begin{definition}[Risk]
\label{def:risk}
	The \emph{risk} of a given $x\in\mc{X}$ is given by
	$
	V(x)\doteq\mb{P}\{\delta\in\Delta \text{ s.t. } x\notin\mc{X}_\delta\}.
	$
\end{definition}

\noin The quantity $V(x)$ measures the violation of the constraints defined by $x\in\mc{X}_\delta$ for all $\delta\in\Delta$. As such, $V:\mc{X}\rightarrow [0,1]$ and, for fixed $x$, it constitutes a deterministic quantity. Nevertheless, since $x^\star$ is a random variable (through its dependance on $(\delta_1,\dots,\delta_N)$), the risk $V(x^\star)$ associated with the solution $x^\star$ is also a \emph{random variable}.\footnote{We assume measurability of all the quantities introduced in this paper.} Our objective will be that of acquiring deeper insight into its distribution. 
\begin{sassumption}[Existence and uniqueness]
\label{ass:exun}
For any $N$ and for any tuple $(\delta_1,\dots,\delta_N)$, the variational inequality \eqref{eq:mainproblem} admits a unique solution identified with $x^\star$. 
\end{sassumption}
\noin 
Throughout the manuscript, we assume that the Standing Assumption is satisfied, so that $x^\star$ is well defined and unique.
It is worth noting that the existence of a solution to \eqref{eq:mainproblem} is guaranteed under very mild conditions on the operator $F$ and on the constraints set $\mc{X}$. Uniqueness of $x^\star$ is instead obtained under structural assumptions on $F$ (e.g., strong monotonicity). While these cases do not encompass all possible variational inequalities arising from \eqref{eq:mainproblem}, the set-up is truly rich and includes important applications such as traffic dispatch \cite{dafermos1980traffic}, cognitive radio systems \cite{scutari2012monotone}, demand-response markets \cite{gentile2017nash}, and many more. Sufficient conditions guaranteeing the satisfaction of the Standing Assumption are presented in \Cref{prop:suffexun}, included at the end of this section.

\begin{definition}[Support constraint]
\label{def:support}
A constraint $x\in\mc{X}_{\delta_i}$ is of support for \eqref{eq:mainproblem}, if its removal modifies the solution $x^\star$. We denote with $S^\star$ the set of \mbox{support constraints associated to $\xstar$.}
\end{definition}
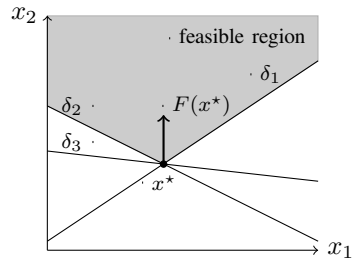
\begin{figure}[h!] 
\vspace*{-5mm}
\begin{center}
\begin{tikzpicture}[scale=0.6]
 
\draw[->] (0,0.8) -- (6,0.8) node[anchor=west]{$x_1$};
\draw[->] (0,0.8) -- (0,6) node[anchor=east]{$x_2$};

\draw (0,1) -- (6,5) node[anchor=north east]{};
\draw (6,1) -- (0,4) node[anchor=north east]{};
\draw (6,2.33) -- (0,3) node[anchor=north east]{};
                  
\filldraw[draw=black,%
          fill=black,
          opacity=0.2,
         ]
           	   (2.575,2.71)
           	-- (6,5)
            -- (6,6)
            -- (0,6)
            -- (0,4)
            -- (2.575,2.71)
            -- cycle;
\filldraw (2.575,2.71) circle (2pt);

\draw[thick, dashed] (2.7,5.5) -- (2.7,5.5) node[anchor=west]{\footnotesize feasible region}; 

\draw[thick, dashed] (1,4) -- (1,4) node[anchor=east]{\footnotesize $\delta_2$};

\draw[thick, dashed] (1,3.2) -- (1,3.2) node[anchor=east]{\footnotesize $\delta_3$};

\draw[thick, dashed] (4.5,4.7) -- (4.5,4.7) node[anchor=west]{\footnotesize $\delta_1$};

\draw[thick, dashed] (2.1,2.3) -- (2.1,2.3) node[anchor=west]{\footnotesize $x^\star$};

\draw[thick, ->] (2.575,2.71) -- (2.575,3.8) node[anchor=north east]{};

\draw[thick, dashed] (2.55,4) -- (2.55,4) node[anchor=west]{\footnotesize $F(x^\star)$};  

\end{tikzpicture} 
\vspace*{-3mm}
\caption{An example of variational inequality \eqref{eq:mainproblem} in dimension two. Each sample in $\{\delta_i\}_{i=1}^3$ defines a feasible region, and the grey area describes the set $\mc{X}$. Note that $\xstar$ is a solution of \eqref{eq:mainproblem} as $\xstar\in\mc{X}$, and the inner product between $F(\xstar)$ with any feasible direction at $\xstar$ is non-negative. Finally, observe that $\delta_1$ defines a support constraint, while $\delta_2$ or $\delta_3$ do not.}
\label{fig:degenerate}
\end{center}
\vspace*{-5mm}
\end{figure}

\noin Within the example depicted in \Cref{fig:degenerate}, it is worth noting that the removal of the constraints $\mc{X}_{\delta_2}$ or $\mc{X}_{\delta_3}$ - \emph{one at a time} - does not modify the solution: indeed neither $\mc{X}_{\delta_2}$, nor $\mc{X}_{\delta_3}$ are support constraints. Nevertheless, the \emph{simultaneous} removal of both $\mc{X}_{\delta_2}$ and $\mc{X}_{\delta_3}$ does change the solution.
To rule out degenerate conditions such as this, we introduce the following assumption, adapted from \cite[Ass. 2]{campi2018wait}.\footnote{If $\{\mc{X}_{\delta_i}\}_{i=1}^N$ are convex, the degenerate instances constitute exceptional situations in that they require the constraints to accumulate precisely at the solution $\xstar$, as in Figure \ref{fig:degenerate}. On the contrary, in the case of non-convex constraint sets, degenerate cases are much more common, see \cite[Sec. 8]{campi2018wait}.}

\begin{assumption}[Non-degeneracy]
\label{ass:nondegeneracy}
 The solution $x^\star$ coincides $\mb{P}^N$ almost-surely %
 with the solution obtained by eliminating all the constraints that are not of support.
\end{assumption}

 We conclude this section providing sufficient conditions that guarantee the existence and uniqueness of the solution to \eqref{eq:mainproblem}, so that the Standing Assumption holds.
\begin{proposition}[Existence and uniqueness, \textup{\cite%
{facchinei2007finite}}]
\label{prop:suffexun}
\begin{itemize}
\item[]
	\item[-] If $\mc{X}$ is nonempty compact convex, and $F$ is continuous, then the solution set of \eqref{eq:mainproblem} is nonempty and compact.
	\item[-]  If $\mc{X}$ is nonempty closed convex, and $F$ is strongly monotone on $\mc{X}$,  then \eqref{eq:mainproblem} admits a unique solution.\footnote{\label{foot:SMON}An operator $F:\mc{X}\rightarrow\Real{n}$ is strongly monotone on $\mc{X}$ if there exists $\alpha>0$ such that $(F(x)-F(y))^\top(x-y)\ge\alpha||x-y||^2$ for all $x,y\in\mc{X}$. If $F$ is continuously differentiable, a sufficient (and easily checkable) condition amounts to requiring the Jacobian of $F$ to be uniformly positive definite, that is $y^\top JF(x)y\ge \alpha||y||^2$ for all $y\in\Real{n}$ for all $x\in\mc{X}^{\rm o}$, where $\mc{X}^{\rm o}$ is an open superset of $\mc{X}$, see \cite[Prop. 2.3.2]{facchinei2007finite}.}
\end{itemize}
\end{proposition}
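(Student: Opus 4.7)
The plan is to treat the two bullet points as standard results from variational inequality theory and prove them using the projection characterization together with Brouwer's fixed point theorem and a monotonicity argument.

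For the first statement, I would first rewrite the VI in \eqref{eq:mainproblem} as a fixed-point problem. Let $\Pi_{\mc{X}}:\Real{n}\to\mc{X}$ denote the Euclidean projection onto $\mc{X}$. A standard calculation shows that $\xstar\in\mc{X}$ satisfies $F(\xstar)^\top(x-\xstar)\ge 0$ for all $x\in\mc{X}$ if and only if $\xstar = \Pi_{\mc{X}}(\xstar - F(\xstar))$. Since $\mc{X}$ is nonempty compact convex and $\Pi_{\mc{X}}$ is nonexpansive (hence continuous), the map $T(x)\doteq \Pi_{\mc{X}}(x-F(x))$ is a continuous self-map of the nonempty compact convex set $\mc{X}$ whenever $F$ is continuous. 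Brouwer's fixed point theorem then yields existence. Compactness of the solution set is immediate: it coincides with the zero set of the continuous map $x\mapsto x-T(x)$ restricted to the compact set $\mc{X}$, hence it is closed and bounded.

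For the second statement, uniqueness is a short monotonicity argument: if $x_1^\star$ and $x_2^\star$ were two solutions, testing each VI with the other point and summing gives
\[
(F(x_1^\star)-F(x_2^\star))^\top(x_1^\star-x_2^\star)\le 0,
\]
contradicting strong monotonicity unless $x_1^\star=x_2^\star$. To obtain existence when $\mc{X}$ is only closed convex (possibly unbounded), the key step is a coercivity argument. Fix an arbitrary $x_0\in\mc{X}$ and consider the truncated set $\mc{X}_R\doteq \mc{X}\cap \bar{\ball}(x_0,R)$, which is nonempty compact convex for every $R>0$. By the first bullet, the VI restricted to $\mc{X}_R$ admits a solution $x_R^\star$. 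Strong monotonicity yields
\[
\alpha\|x_R^\star-x_0\|^2 \le (F(x_R^\star)-F(x_0))^\top(x_R^\star-x_0) \le -F(x_0)^\top(x_R^\star-x_0)\le \|F(x_0)\|\,\|x_R^\star-x_0\|,
\]
so $\|x_R^\star-x_0\|\le \|F(x_0)\|/\alpha$ uniformly in $R$. For $R$ strictly larger than this bound, $x_R^\star$ lies in the relative interior of $\mc{X}_R$ inside $\mc{X}$, which one can show implies that $x_R^\star$ solves the original VI on $\mc{X}$.

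The main obstacle is the coercivity step in part (ii): one has to argue carefully that interiority of $x_R^\star$ in $\bar{\ball}(x_0,R)$ allows an arbitrary direction $x\in\mc{X}$ to be tested by moving along the segment $x_R^\star+t(x-x_R^\star)$ for small $t>0$, which remains inside $\mc{X}_R$, so the inequality extends to the whole of $\mc{X}$. Once this is established, uniqueness fixes the solution unambiguously. Since both parts are classical, I would simply cite \cite{facchinei2007finite} for the full technical details, as the authors do in the statement.
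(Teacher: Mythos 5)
Your proposal is correct and is essentially the standard textbook argument; note that the paper itself offers no proof of this proposition and simply defers to \cite{facchinei2007finite}, so there is no in-paper derivation to compare against. Your reconstruction — the fixed-point characterization $\xstar = \Pi_{\mc{X}}(\xstar - F(\xstar))$ plus Brouwer for the compact case, the two-point monotonicity test for uniqueness, and the truncation-plus-coercivity argument with the uniform bound $\|x_R^\star - x_0\| \le \|F(x_0)\|/\alpha$ for existence on unbounded closed convex sets — is precisely how these results are proved in the cited reference (cf.\ Cor.\ 2.2.5 and Thm.\ 2.3.3 there), and all the individual steps you give are sound. One small caveat worth making explicit: your second bullet invokes the first bullet on the truncated set $\mc{X}_R$, which requires $F$ to be continuous; the proposition's second item does not list continuity among its hypotheses, so you are relying on the (reasonable, and implicit in the paper's setting where $F:\Real{n}\rightarrow\Real{n}$ is the operator of \eqref{eq:mainproblem}) assumption that $F$ is continuous throughout. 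With that understood, the argument is complete.
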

\noin
If \eqref{eq:mainproblem} is used to characterize the solution of a strongly convex and smooth optimization program (i.e. if $F(x)=\nabla_xJ(x)$ for a smooth $J:\mc{X}\rightarrow\mb{R}$), the previous proposition applies directly since the gradient of a strongly convex function is strongly monotone~\cite[Prop. 17.10]{bauschke2011convex}.

\section{Main result: probabilistic feasibility for variational inequalities}
\label{sec:mainresult}
The aim of this section is to provide bounds on the risk associated to the solution of \eqref{eq:mainproblem} that hold with high confidence.  \mbox{Towards this goal, we introduce the map $t:\mb{N}\rightarrow [0,1]$.}%
\begin{definition}
\label{def:t}
Given $\beta \in(0,1)$, for any $k\in\{0,\dots,N-1\}$ consider the polynomial equation in the unknown $t$ 
\be
\frac{\beta}{N+1}\sum_{l=k}^{N}\binom{l}{k} t^{l-k}
-\binom{N}{k}t^{N-k}=0.
\label{eq:poly}
\ee
Let $t(k)$ be its unique solution in the interval $(0,1)$.\footnote{Existence and uniqueness of the solution to the polynomial equation \eqref{eq:poly} is shown in \cite[Thm. 2]{campi2018wait}.} 
Further, let $t(k)=0$ for any $k\ge N$.
 \end{definition}
\noin 
As shown in recent results on scenario optimization \cite{campi2018wait}, the distribution of the risk $V(x^\star)$ is intimately connected with the number of support constraints at the solution $x^\star$, which we identify with $\sstar=|S^\star|$. 
Given a confidence parameter $\beta\in(0,1)$, our goal is to determine a function $\varepsilon(\sstar)$ so that
\[
{\mb{P}}^N[V(\xstar)\le \varepsilon(\sstar)]\ge 1-\beta
\]
 holds true for any variational inequality \eqref{eq:mainproblem} satisfying the Standing Assumption and Assumption \ref{ass:nondegeneracy}. Theorem \ref{thm:mainresult} provides one way to construct $\varepsilon(\sstar)$ by means of $t(\sstar)$.
\begin{theorem}[Probabilistic feasibility for VI]
\label{thm:mainresult}
Given $\beta \in (0,1)$, consider $t:\mb{N}\rightarrow\mb{R}$ as per Definition \ref{def:t}.
\begin{itemize}
\item[(i)]
Under the Standing Assumption, and Assumption \ref{ass:nondegeneracy}, for any $\Delta$ and $\mb{P}$ it holds that
\be
\label{eq:feas_general}
{\mb{P}}^N[V(\xstar)\le \varepsilon(\sstar)]\ge 1-\beta
\quad \text{with} \quad
\varepsilon(\sstar) \doteq 1- t(\sstar),
\ee
\item[(ii)] If, in addition, the constraint sets $\{\mc{X}_{\delta_i}\}_{i=1}^N$ are convex, then $\sstar\le n$ (dimension of the decision variable $x$), and the following a-priori bound holds for all $\Delta$ and $\mb{P}$
\[
{\mb{P}}^N[V(\xstar)\le \varepsilon(n)]\ge 1-\beta
\quad \text{with} \quad
\varepsilon(n) \doteq 1- t(n).
\]
\end{itemize}
\end{theorem}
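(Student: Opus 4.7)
The strategy is to reduce both parts of the theorem to the general risk bound for scenario decision problems derived in the wait-and-judge paper \cite{campi2018wait}. That reference provides a bound of the form $\mathbb{P}^N[V(x^\star) \le 1 - t(s^\star)] \ge 1 - \beta$ for abstract decision mechanisms that map a multisample to a single outcome, provided a notion of support constraint is in place and a non-degeneracy condition is satisfied. The plan is thus to (a) verify that the variational inequality \eqref{eq:mainproblem} fits this abstract framework, which yields part (i); and (b) show that under the convexity hypothesis one has $s^\star \le n$ almost surely, which together with the monotonicity of $t$ gives part (ii).

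For part (i), the Standing Assumption guarantees that the map $(\delta_1,\ldots,\delta_N) \mapsto x^\star$ is single-valued, so \eqref{eq:mainproblem} acts as a bona fide decision mechanism. Definition \ref{def:support} matches the standard notion of support constraint (removal changes the outcome), and Assumption \ref{ass:nondegeneracy} coincides with the non-degeneracy hypothesis in \cite[Ass. 2]{campi2018wait}. Once these correspondences are checked, \cite[Thm. 1]{campi2018wait} applies directly and delivers \eqref{eq:feas_general}.

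For part (ii), the main step is to show that $s^\star \le n$ when the $\mathcal{X}_{\delta_i}$'s are convex. The argument rests on the variational characterization $-F(x^\star) \in N_{\mathcal{X}}(x^\star)$, where $N_{\mathcal{X}}(x^\star)$ denotes the normal cone of $\mathcal{X}$ at $x^\star$. Under a constraint qualification, this cone decomposes as the nonnegative hull of the outward normals of the active constraints among $\{\mathcal{X}_{\delta_i}\}_{i=1}^N$. A Caratheodory-type reduction in $\mathbb{R}^n$ then yields a subset $S$ of indices with $|S| \le n$ such that $-F(x^\star) \in N_{\bigcap_{i \in S} \mathcal{X}_{\delta_i}}(x^\star)$. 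For any $j \notin S$, the set $\bigcap_{i \ne j} \mathcal{X}_{\delta_i}$ is contained in $\bigcap_{i \in S} \mathcal{X}_{\delta_i}$, and since the normal cone of a smaller convex set at a common point contains that of a larger one, $-F(x^\star)$ still lies in $N_{\bigcap_{i \ne j} \mathcal{X}_{\delta_i}}(x^\star)$. Thus $x^\star$ remains the (unique, by the Standing Assumption) solution of the VI after removal of the $j$-th constraint, showing that $j$ is not of support. Hence $s^\star \le |S| \le n$. The a-priori bound then follows from part (i) together with the fact that $1 - t(k)$ is non-decreasing in $k$, a monotonicity property of $t$ inherited from \cite{campi2018wait}.

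The main obstacle is part (ii): while the Caratheodory argument is familiar in convex scenario optimization, its adaptation to variational inequalities calls for a constraint qualification to legitimize the normal-cone decomposition, as well as care at degenerate configurations where many constraints touch $x^\star$ non-transversally. Part (i), by contrast, reduces cleanly to \cite{campi2018wait} once the correspondence between our support-constraint and non-degeneracy definitions and those of the scenario framework has been verified.
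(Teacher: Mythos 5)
Your part (i) follows essentially the paper's route: the paper likewise reduces the claim to \cite{campi2018wait}, tracing through the proof of Theorem~1 there to confirm that the only properties of the optimization structure actually used --- feasibility of the solution built from $k$ constraints with respect to all $N$ constraints, and the non-degeneracy condition --- survive verbatim in the VI setting. Your reduction is stated a bit more loosely (the cited results are formulated for optimization programs rather than for arbitrary decision mechanisms, so the verification you defer to ``checking correspondences'' is precisely the content of the paper's argument), but the idea is the same.

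Part (ii) is where you genuinely diverge, and where there is a gap. Your Carath\'eodory argument hinges on decomposing $N_{\mathcal{X}}(x^\star)$ as the conic hull of normals contributed by the individual active constraints, which, as you note yourself, requires a constraint qualification; but the theorem assumes only convexity of the sets $\mathcal{X}_{\delta_i}$, and no CQ is available. Convex sets can meet non-transversally, in which case the normal cone of the intersection strictly contains the sum $\sum_i N_{\mathcal{X}_{\delta_i}}(x^\star)$ and $-F(x^\star)$ need not lie in the latter, so the Carath\'eodory selection cannot even get started. You flag this as ``the main obstacle'' but do not resolve it, so the proof is incomplete as written. The paper avoids normal cones altogether: it freezes $c = F(x^\star)$ and observes that $x^\star$ minimizes the linear objective $c^\top x$ over $\bigcap_{i=1}^N \mathcal{X}_{\delta_i}$; the classical scenario result of \cite{calafiore2005uncertain} then bounds the number of support constraints of that convex program by $n$ with no constraint qualification needed, and one checks (using the uniqueness in the Standing Assumption, exactly as you do in your removal step) that every support constraint of the VI is also a support constraint of this linear program. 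Rewriting your part (ii) along these lines closes the gap; the final appeal to the monotonicity of $\varepsilon(k)=1-t(k)$ is the same in both arguments.
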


\begin{proof}
The proof is deferred to the Appendix.
\end{proof}

The first statement in Theorem \ref{thm:mainresult} provides an a-posteriori bound, and requires no additional assumption other than the Standing Assumption and Assumption \ref{ass:nondegeneracy} (e.g., no convexity of the constraint sets is required). In practice, one computes a solution to \eqref{eq:mainproblem}, determines $\sstar$, and is then given a probabilistic feasibility statement for any choice of $\beta\in(0,1)$.\footnote{Computing the number of support constraints can be easily achieved by solving the original problem where constraints are removed one at a time.} In this respect, we are typically interested in selecting $\beta$ very small (e.g., $10^{-6}$) so that the statement $V(\xstar)\le \varepsilon(\sstar)$ holds with very high confidence (e.g., $1-10^{-6}=0.999999$). 
Upon assuming convexity of the constraints sets, the second statement provides an a-priori bound of the form \eqref{eq:feas_general} where $\sstar$ is replaced by $n$ (the dimension of the decision variable). Overall, Theorem \ref{thm:mainresult} shows that the upper bound on the risk derived in \cite[Thm. 4]{campi2018wait} is not limited to optimization programs, but holds for the far more general class of variational inequality problems. For a plot of $\varepsilon(k)$, see \cite[Fig. 3]{campi2018wait}.

\vspace*{\myspace}
\noin {\bf Computational aspects.} While Theorem~\ref{thm:mainresult} provides certificates of probabilistic feasibility, its result is of practical interest especially if it is possible to determine a solution of \eqref{eq:mainproblem} \emph{efficiently}. With respect to the computational aspects, much is known for the class of monotone variational inequalities, i.e. those variational inequalities where the operator $F$ is monotone or strongly monotone (see \cref{foot:SMON} for a definition). Examples of efficient algorithms for strongly monotone and monotone variational inequalities include projection methods, proximal methods, splitting and interior point methods.\footnote{We redirect the reader to \cite[Chap. 12]{facchinei2007finite} for an extensive treatment.} On the contrary, if the operator associated to \eqref{eq:mainproblem} is \emph{not monotone}, the problem is intractable to solve in the worst-case. 
Indeed, non-monotone variational inequalities hold non-monotone linear complementarity problems as a special case. The latter class is known \mbox{to be $\mc{NP}$-complete \cite{chung1989np}.}

\subsection{Extension to quasi variational inequalities}
\label{subsec:quasivi}
In this section we show how the results of Theorem \ref{thm:mainresult} carry over to the case when \eqref{eq:mainproblem} is replaced by a more general class of problems known as quasi variational inequality (QVI). Informally, quasi variational inequalities extend the notion of variational inequality by allowing the decision set $\mc{X}$ to be parametrized by $x$, see  \cite{chan1982generalized}. QVIs are important tools used to model complex equilibrium problems arising in various fields such as games with shared constraints, transportation network, solid mechanics, biology, and many more \cite{bensoussan1984controle, beremlijski2002shape, bliemer2003quasi, hammerstein1994game, kravchuk2007variational}.
As we shall see in Section \ref{sec:robustgames}, this generalization will be used to provide concrete performance guarantees for robust Nash equilibrium problems, whenever the uncertainty enters in the agents' cost functions. Let $\mc{X}_{\delta_i} : \mb{R}^n \rightrightarrows 2^{\mb{R}^n}$ be elements of a collection of set-valued maps 
$\{\mc{X}_\delta\}_{\delta\in\Delta}$, for $i\in\{1,\dots,N\}$.
Given $F:\mb{R}^n\rightarrow\mb{R}^n$, we consider the following quasi variational inequality problem: find $\xstar\in\mc{X}(\xstar) \doteq\cap_{i=1}^N \mc{X}_{\delta_i}(\xstar)$ such that
\be
\label{eq:QVI}
F(x^\star)^\top (x - \xstar) \ge 0
\qquad
\forall x \in\mc{X}(\xstar).
\ee 
Once more, we assume that $\{\delta_i\}_{i=1}^N$ are independent samples from the probability space $(\Delta,\mc{F},\mb{P})$.
Additionally, we assume that \eqref{eq:QVI} admits a unique solution.
The notion of support constraint carries over unchanged from Definition \ref{def:support}, while the notion of risk requires a minor adaptation. %
\begin{definition}[Risk for QVI]
\label{def:riskQVI}
	The \emph{risk} associated to $x$ is 
	\[
	V(x)\doteq\mb{P}\{\delta \in \Delta~\text{s.t.}~x\notin\mc{X}_\delta(x)\}
	\]
\end{definition}
\noin
The next theorem shows that the main result presented in Theorem \ref{thm:mainresult} extends to quasi variational inequalities.
\begin{theorem}[Probabilistic feasibility for QVI]
\label{thm:mainresultQVI}
Let $\xstar$ be the (unique) solution of \eqref{eq:QVI} and $\sstar$ be the number of support constraints.
Let Assumption \ref{ass:nondegeneracy} hold.
Given $\beta\in(0,1)$, let $t:\mb{N}\rightarrow\mb{R}$ be as per \Cref{def:t}. Then, for any $\Delta$, $\mathbb{P}$, it is 
\be
\label{eq:QVIbound}
{\mb{P}}^N[V(\xstar)\le \varepsilon(\sstar)]\ge 1-\beta
\quad \text{where} \quad
\varepsilon(\sstar) \doteq 1- t(\sstar).
\ee
If, in addition, the sets $\{\mc{X}_{\delta_i}(\xstar)\}_{i=1}^N$ are convex, then $\sstar \le n$ and the bound \eqref{eq:QVIbound} holds a-priori with $n$ in place of $\sstar$.
\end{theorem}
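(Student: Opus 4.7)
The plan is to adapt the argument underlying \Cref{thm:mainresult} to the QVI setting, exploiting the fact that the scenario-approach machinery is essentially oblivious to whether the decision comes from a variational inequality or a quasi variational inequality, provided the abstract ingredients it relies on are in place.

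First, I would verify that all the abstract ingredients of the scenario-approach proof survive the transition from VI to QVI: (i) a decision map $(\delta_1,\dots,\delta_N)\mapsto \xstar$ returning a unique point, which is assumed in the statement; (ii) the notion of support constraint from \Cref{def:support}, imported verbatim; (iii) non-degeneracy as in \Cref{ass:nondegeneracy}; and (iv) the adapted risk in \Cref{def:riskQVI}, namely $V(\xstar)=\mb{P}\{\delta:\xstar\notin\mc{X}_\delta(\xstar)\}$, which is the natural QVI analogue of \Cref{def:risk}. With these identifications, the combinatorial/exchangeability argument used to prove \Cref{thm:mainresult}(i) applies essentially word-for-word to the QVI decision map, yielding the a-posteriori bound \eqref{eq:QVIbound}.

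For the a-priori bound in part (ii), I would exploit convexity by freezing the set-valued maps at $\xstar$. The QVI inequality $F(\xstar)^\top(x-\xstar)\ge 0$ for all $x\in\cap_i \mc{X}_{\delta_i}(\xstar)$ says precisely that $\xstar$ minimizes the linear function $y\mapsto F(\xstar)^\top y$ over the convex intersection $\cap_i \mc{X}_{\delta_i}(\xstar)$. A Helly-type bound---the same one used behind the scenes in \Cref{thm:mainresult}(ii)---then shows that at most $n$ of these sets can be of support for this linear program at $\xstar$. To close the argument I would show that any QVI-support constraint is also a support of the frozen linear program: if, to the contrary, some $\mc{X}_{\delta_j}(\xstar)$ were not LP-support, then $\xstar$ would still satisfy both the membership condition $\xstar\in\cap_{i\neq j}\mc{X}_{\delta_i}(\xstar)$ and the angle condition for the reduced QVI, contradicting the QVI-support property of $\mc{X}_{\delta_j}$. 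Thus $\sstar\le n$ and the a-priori bound follows.

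The hard part will be the support-constraint correspondence in part (ii): concluding \emph{LP-non-support at $\xstar$ implies QVI-non-support} formally requires uniqueness of the \emph{reduced} QVIs, which is implicit in \Cref{def:support} but not spelled out for QVIs. A clean way to sidestep this difficulty is to keep the entire argument at the abstract scenario-approach level throughout, in which case the extension is purely notational and no substantive new idea is required beyond replacing $\mc{X}_{\delta_i}$ by $\mc{X}_{\delta_i}(\xstar)$ whenever one refers to the feasibility of $\xstar$.
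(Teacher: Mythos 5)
Your proposal is correct and follows essentially the same route as the paper, which omits the proof of this theorem and states that one should follow the proof of \Cref{thm:mainresult} mutatis mutandis: part (i) by checking that the abstract scenario-approach ingredients (unique decision map, support constraints, non-degeneracy, adapted risk) survive the passage to QVIs, and part (ii) by freezing the sets at $\xstar$, reducing to the linear program $\min F(\xstar)^\top y$ over $\cap_i \mc{X}_{\delta_i}(\xstar)$, and showing QVI-support constraints are contained among the LP-support constraints. The uniqueness-of-the-reduced-problem subtlety you flag is precisely the one the paper itself addresses via a footnote (invoking the Standing Assumption) in the proof of \Cref{thm:mainresult}(ii), so your treatment matches the intended argument.
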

\begin{proof}
The proof is omitted, due to space considerations. Nevertheless, it is possible to follow (mutatis mutandis) the derivation presented in the proof of \Cref{thm:mainresult}.	
\end{proof}
\section{Application to robust game theory}
\label{sec:robustgames}
\subsection{Uncertainty entering in the constraint sets}
\label{subsec:uncertainconstr}
We begin by considering a general game-theoretic model, where agents aim to minimize private cost functions, while satisfying uncertain local constraints \emph{robustly}. %
Formally, each agent $j\in\mcm=\{1,\dots,\N\}$ is allowed to select $\xj\in\Xj\doteq \cap_{i=1}^N \Xjdeltai\subseteq \mb{R}^\n$, where $\{\Xjdeltai\}_{i=1}^N$ is a collection of sets from the family $\{\Xjdelta\}_{\delta \in \Delta}$, and $\{\delta_i\}_{i=1}^N$ are independent samples from the probability space $(\Delta,\mc{F},\mb{P})$. %
Agent $\j\in\mcm$ aims at minimizing the cost function $\Jj: \Xj\rightarrow\mb{R}$. 
To ease the notation, we define $x^{-\j}=(x^1,\dots,x^{\j-1},x^{\j+1},\dots,x^\N)$, for any $j\in\mcm$.
We consider the notion of Nash equilibrium.
\begin{definition}[Nash equilibrium]
A tuple $\xne=(\xne^1,\dots,\xne^\N)$ is a Nash equilibrium if $\xne\in\X^1\times\dots\times \X^{\N}$ and $\Jj(\xne^{\j}, \xne^{-\j})\le \Jj(\xj,\xne^{-\j})$ for all deviations $\xj\in\Xj$ and for all agents $j\in\mcm$.
\end{definition}
\begin{assumption}
\label{ass:nashexists}
For all $\j\in\mcm$, the cost function $\Jj$ is continuously differentiable, and convex in $\xj$ for any fixed $x^{-j}$. The sets $\{\Xj\}_{j=1}^\N$ are non-empty, closed, convex for every tuple $(\delta_1,\dots,\delta_N)$, for every $N$.
\end{assumption}
The next proposition, adapted from \cite{facchinei2007finite} draws the key connection between Nash equilibria and variational inequalities.
\begin{proposition}[Nash equilibria and VI \text{\cite[Prop. 1.4.2]{facchinei2007finite}}]
\label{prop:NEandVI}
Let Assumption \ref{ass:nashexists} hold. Then a point $\xne$ is a Nash equilibrium if and only if it solves \eqref{eq:mainproblem}, with 
\be
\label{eq:defFX}
F(x)\doteq
\begin{bmatrix}
\nabla_{x^1} J^1(x)\\
\vdots\\
\nabla_{x^{\N}} J^{\N}(x)\\
\end{bmatrix},
\quad
\mc{X}_{\delta_i}
\doteq
\X^1_{\delta_i}\times\dots\times\X^\N_{\delta_i}.
\ee
\end{proposition}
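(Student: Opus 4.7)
The plan is to prove the two directions separately, relying on the first-order characterization of minimizers of convex differentiable functions over convex sets. Throughout, I will exploit the product structure $\mathcal{X} = \mathcal{X}^1 \times \dots \times \mathcal{X}^{\N}$ induced by the definition of $\mathcal{X}_{\delta_i}$ in \eqref{eq:defFX}, which is what makes the componentwise optimality conditions assemble cleanly into a single variational inequality on $\mathcal{X}$.

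For the forward implication, I would start from the Nash equilibrium condition: for every $j\in\mcm$, the point $\xne^{\j}$ minimizes the function $\xj \mapsto J^{\j}(\xj, \xne^{-\j})$ over $\mathcal{X}^{\j}$. Under Assumption~\ref{ass:nashexists}, this objective is convex and continuously differentiable in $\xj$, and $\mathcal{X}^{\j}$ is nonempty, closed, and convex. The standard first-order optimality condition then yields $\nabla_{\xj} J^{\j}(\xne)^\top (\xj - \xne^{\j}) \ge 0$ for all $\xj\in\mathcal{X}^{\j}$. Summing these $\N$ inequalities, and observing that for any $x\in\mathcal{X}$ the difference $x-\xne$ decomposes into the blocks $x^{\j}-\xne^{\j}$ with each block ranging independently over $\mathcal{X}^{\j}$ (again because of the product structure), produces exactly $F(\xne)^\top(x-\xne)\ge 0$ for all $x\in\mathcal{X}$, with $F$ as defined in \eqref{eq:defFX}.

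For the reverse implication, I would argue componentwise by choosing suitable test vectors in the VI. Fix an arbitrary $\j\in\mcm$ and any $y^{\j}\in\mathcal{X}^{\j}$, and consider the point $x = (\xne^{1},\dots,\xne^{\j-1}, y^{\j}, \xne^{\j+1},\dots,\xne^{\N})$. This $x$ lies in $\mathcal{X}$ since $\mathcal{X}$ is the product of the $\mathcal{X}^{\j}$'s, and substituting it in the VI makes all blocks of $x-\xne$ vanish except the $\j$-th, leaving $\nabla_{\xj} J^{\j}(\xne)^\top (y^{\j} - \xne^{\j}) \ge 0$. Since $J^{\j}(\cdot, \xne^{-\j})$ is convex and continuously differentiable on $\mathcal{X}^{\j}$, this first-order inequality is sufficient for $\xne^{\j}$ to be a global minimizer of $J^{\j}(\cdot,\xne^{-\j})$ over $\mathcal{X}^{\j}$; as $\j$ was arbitrary, $\xne$ is a Nash equilibrium.

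I do not anticipate any serious technical obstacle: the result is essentially bookkeeping once the first-order optimality characterization for convex programs is invoked, and the only place where care is needed is in exploiting the Cartesian product structure of $\mathcal{X}$ to pass between componentwise inequalities and a single VI. The convexity and differentiability hypotheses in Assumption~\ref{ass:nashexists}, together with closedness and nonemptyness of $\mathcal{X}^{\j}$, are exactly what makes the first-order condition both necessary and sufficient, so no additional regularity is required.
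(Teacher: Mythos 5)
Your proposal is correct, but it takes a different route from the paper. The paper's proof consists of a single observation: it cites the Nash--VI equivalence of \cite[Prop.~1.4.2]{facchinei2007finite} as a black box and only verifies the set identity $\X^1\times\dots\times\X^{\N}=\bigcap_{i=1}^N\bigl(\X^1_{\delta_i}\times\dots\times\X^{\N}_{\delta_i}\bigr)=\bigcap_{i=1}^N\mc{X}_{\delta_i}=\mc{X}$, i.e.\ that the product of the intersected per-agent sets is the intersected product, so that the constraint set appearing in \eqref{eq:mainproblem} really is the one the cited proposition refers to. You instead prove the cited equivalence from scratch: forward direction via the first-order optimality condition for each agent's convex program and summation over $j$, reverse direction via block test vectors that isolate one coordinate of the VI, both of which are the standard arguments underlying the Facchinei--Pang result and are carried out correctly under Assumption~\ref{ass:nashexists}. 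What your approach buys is self-containedness; what it glosses over is precisely the one step the paper makes explicit, namely that the product structure $\mc{X}=\X^1\times\dots\times\X^{\N}$ you invoke throughout requires interchanging the intersection over samples with the Cartesian product over agents. That interchange is elementary, but since \eqref{eq:mainproblem} is stated with $\mc{X}=\bigcap_{i=1}^N\mc{X}_{\delta_i}$ while the Nash equilibrium is defined on $\X^1\times\dots\times\X^{\N}$, it deserves one explicit line rather than the phrase ``induced by the definition of $\mc{X}_{\delta_i}$''.
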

\begin{proof}
The proof is reported in the Appendix.
\end{proof}

Within the previous model, the uncertainty described by $\delta\in\Delta$ is meant as \emph{shared} among the agents. This is indeed the most common and challenging situation. In spite of that, our model also includes the case of \emph{non-shared} uncertainty, i.e. the case where $\Xjdelta$ is of the form $\mc{X}^j_{\delta^j}$ as $\delta$ can represent a vector of uncertainty. Limitedly to the latter case, it is possible to derive probabilistic guarantees on each agent's feasibility by direct application of the scenario approach \cite{campi2008exact} to each agent optimization 
$
\xne^j\in\argmin_{\xj\in\mc{X}^\j}\Jj(\xj,\xne^{-\j})
$
, after having fixed $x^{-\j}=\xne^{-\j}$.
\noin Nevertheless, for the case of shared uncertainty, a direct application of \cite{campi2018wait} provides no answer.\footnote{To see this, observe that a constraint that is not of support for agent's $j$ optimization program with fixed $x^{-\j}=\xne^{-\j}$, might instead be of support for the Nash equilibrium problem, as its removal could modify $\xne^{-\j}$, which in turn modifies $\xne^{\j}$.} 
Instead, the following corollary offers probabilistic feasibility guarantees for $\xne$. In this context, a constraint is of support for the Nash equilibrium problem, if its removal changes the solution.
\begin{corollary}[Probabilistic feasibility for $\xne$]
\label{cor:first}
\begin{itemize}
\item[]
\item[-] Let Assumption \ref{ass:nashexists} hold. Then, a Nash equilibrium exists.
\item[-] Further assume that the operator $F$ defined in \eqref{eq:defFX} is strongly monotone. Then, $\xne$ is unique.
\item[-] Fix $\beta\in (0,1)$, and let $t:\mb{N}\rightarrow\mb{R}$ be as per Definition \ref{def:t}.
In addition to the previous assumptions, assume that $\xne$ coincides $\mb{P}^N$ almost-surely with the Nash equilibrium of a game obtained by eliminating all the constraints that are not of support.
Then, the following a-posteriori and a-priori bounds hold for any $\Delta$ and $\mb{P}$ 
\[
\begin{split}
&\mb{P}^N [V(\xne)\le\varepsilon(\sstar)]\ge 1-\beta
~\text{with}~
\varepsilon(\sstar) \doteq 1- t(\sstar),\\
&\mb{P}^N [V(\xne)\le\varepsilon(n)]\ge 1-\beta ~~\text{with}~
\varepsilon(n) \doteq 1- t(n),
\end{split}
\]
where $n$ is the dimension of the decision variable $x$, and $\sstar$ is the number support constraints of $\xne$.
\end{itemize}
\end{corollary}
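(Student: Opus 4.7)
The plan is to reduce the three bullets of the corollary to results already established in the paper, by exploiting the equivalence between the Nash equilibrium problem and the variational inequality induced by the operator $F$ and the product set $\mc{X}_{\delta_i}$ introduced in \eqref{eq:defFX}. Under Assumption \ref{ass:nashexists}, each $\mc{X}_{\delta_i}=\X^1_{\delta_i}\times\cdots\times\X^{\N}_{\delta_i}$ inherits non-emptiness, closedness, and convexity from its factors, while the continuous differentiability of each $\Jj$ ensures continuity of $F$. Proposition \ref{prop:NEandVI} then certifies that $\xne$ solves the game if and only if it solves the induced VI \eqref{eq:mainproblem}, so every solution-level property transfers between the two formulations.

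For the first bullet I would invoke the existence part of \Cref{prop:suffexun} applied to the induced VI, obtaining existence of $\xne$ through \Cref{prop:NEandVI}. For the second bullet, strong monotonicity of $F$ combined with closedness and convexity of $\mc{X}$ triggers the uniqueness statement of \Cref{prop:suffexun}, thereby establishing uniqueness of $\xne$. For the third bullet I would apply \Cref{thm:mainresult} to the induced VI: the Standing Assumption is met by virtue of the previous two bullets, \Cref{ass:nondegeneracy} is secured by the explicit non-degeneracy hypothesis stated in the corollary, and convexity of $\{\mc{X}_{\delta_i}\}_{i=1}^N$ unlocks the a-priori bound via $\sstar\le n$, where $n$ is the dimension of the stacked variable $x=(x^1,\dots,x^{\N})$.

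The only genuinely non-mechanical step, and hence the main obstacle, is verifying that the notion of support constraint for the Nash equilibrium problem (a constraint whose removal changes $\xne$) coincides with the notion of support constraint for the induced VI (a constraint whose removal changes $\xstar$). This correspondence follows directly from \Cref{prop:NEandVI}: removing a scenario constraint $\X^{\j}_{\delta_i}$ from the game produces a new game whose induced VI is obtained by removing precisely the corresponding factor from $\mc{X}_{\delta_i}$ in the original VI; since the game and its induced VI share the same unique solution throughout this construction, the solution is altered in the game if and only if it is altered in the VI. The same identification makes $V(\xne)$ and $V(\xstar)$ coincide, so the two bounds of \Cref{thm:mainresult} transfer verbatim to the corollary.
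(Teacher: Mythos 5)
Your proposal is correct and follows essentially the same route as the paper: Proposition \ref{prop:NEandVI} to identify the Nash equilibrium problem with the induced VI, Proposition \ref{prop:suffexun} for existence and uniqueness, and Theorem \ref{thm:mainresult} for the two risk bounds. Your extra discussion of why support constraints and the risk coincide across the two formulations is a welcome elaboration the paper leaves implicit; just phrase the removal step per sample (dropping the whole block $\X^1_{\delta_i}\times\cdots\times\X^{\N}_{\delta_i}$ at once) rather than per agent-factor $\X^{\j}_{\delta_i}$.
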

\begin{proof}
See the Appendix.	
\end{proof}
\noin
A consequence of \Cref{cor:first} is the possibility to bound the infeasibility risk associated to any agent $\j\in\mcm$. Indeed, let 
$
V^j(x)\doteq\mb{P}\{\delta \in \Delta~\text{s.t.}~\xj\notin\mc{X}^j\}
$. Since $V^j(x)\le V(x)$, \Cref{cor:first} ensures that $\mb{P}^N [V^j(\xne)\le\varepsilon(\sstar)]\ge 1-\beta$.

\subsection{Uncertainty entering in the cost functions}
We consider a game-theoretic model where the cost function associated to each agent depends on an uncertain parameter. Within this setting, we first revisit the notion of \emph{robust equilibrium} introduced in \cite{aghassi2006robust}. 
Our goal is to exploit the results of \Cref{sec:mainresult} and bound the probability that an agent will incur a higher cost, compared to what predicted. 

Let $\mcm=\{1,\dots,\N\}$ be a set of agents, where $\j\in\mcm$ is constrained to select $\xj\in\Xj$. Denote $\X\doteq\X^1\times\dots\times\X^\N$. The cost incurred by agent $j\in\mcm$ is described by the function $\Jj(\xj,x^{-\j};\delta) :\X\times\Delta\rightarrow\mb{R}$. 
Since $\Jj$ depends both on the decision of the agents, and on the realization of $\delta\in\Delta$, the notion of Nash equilibrium is devoid of meaning. %
Instead, \cite{aghassi2006robust, crespi2017robust} propose the notion of \emph{robust equilibrium} as a robustification of the former.%
\footnote{A feasible tuple $\xre$ is a robust equilibrium if $\forall j\in\mcm$, $\forall \xj\in\Xj$,  it is 
$\max_{\delta\in\Delta}\Jj(\xre^\j,\xre^{-\j};\delta)\le \max_{\delta\in\Delta}\Jj(\xj,\xre^{-\j};\delta)$, see \cite{aghassi2006robust,crespi2017robust}. }
While a description of the uncertainty set $\Delta$ is seldom available, agents have often access to past realizations  $\{\delta_i\}_{i=1}^N$, which we assume to be independent samples from $(\Delta,\mc{F},\mb{P})$. It is therefore natural to consider the ``sampled'' counterpart of a robust equilibrium.

\begin{definition}[Sampled robust equilibrium]
\label{def:sre}
Given samples $\{\delta_i\}_{i=1}^N$, a tuple $\xsr$ is a sampled robust equilibrium if $\xsr\in\X$ and 
$\max_{i\in\{1,\dots,N\}}\Jj(\xsr^\j,\xsr^{-\j};\delta_i)\le \max_{i\in\{1,\dots,N\}}\Jj(\xj,\xsr^{-\j};\delta_i)$, $\forall \xj\in\Xj$, $\forall j\in\mcm$. 
\end{definition}

\noin 
Observe that $\xsr$ can be thought of as a Nash equilibrium with respect to the worst-case cost functions 
 \be
 \label{eq:worstcosts}
 \Jjmax(x)\doteq \max_{i\in\{1,\dots,N\}} \Jj(x;\delta_i).
 \ee
 In parallel to what discussed in \Cref{subsec:uncertainconstr}, the uncertainty should be regarded as \emph{shared} amongts the agents. 
 In this context, we are interested in bounding the probability that a given agent $j\in\mcm$ will incur a higher cost, compared to what predicted by the empirical worst case $\Jjmax(\xsr)$.%

\begin{definition}[Agent's risk]
\label{def:agentrisk}
The risk incurred by agent $j\in\mcm$ at the given $x\in\mc{X}$ is
\[
V^j(x)=\mb{P}\left\{\delta \in\Delta ~\text{s.t.}~\Jj(x;\delta)\ge 
\Jjmax(x)\right\}
\]
\end{definition}
\noin In addition to existence and uniqueness results, the following corollary provides a bound on such risk measure.

\begin{corollary}[Probabilistic feasibility for $\xsr$]
\label{cor:second}
Assume that, for all $\j\in\mcm$, the cost function $\Jj$ is continuously differentiable, as well as convex in $\xj$ for fixed $x^{-j}$ and $\delta$.
Assume that the sets $\{\Xj\}_{j=1}^\N$ are non-empty, closed, convex.
\begin{itemize}
\item[-] Then, a sampled robust equilibrium exists.
\item[-] Further assume that, for all tuples $(\delta_1,\dots,\delta_N)$, and $N$,
\be
\label{eq:Fmax}
F(x) \doteq 
\begin{bmatrix}
\partial_{x^1} J^1_{\rm max}(x)%
\\
\vdots
\\
\partial_{x^\N} J^{\N}_{\rm max}(x)
\end{bmatrix}
\ee
is strongly monotone.%
\footnote{ $\partial_{x^j} J^j_{\rm max}(x)$ denotes the subgradient of $J^j_{\rm max}$ with respect to $x^j$, computed at $x$. While the operator $F(x)$ is now set valued, the definition of strong monotonicity given in \cref{foot:SMON} can be easily generalized, \cite{boyd2004convex}.}
Then $\xsr$ is unique.
\item[-] Fix $\beta\in (0,1)$. Let $\varepsilon(k)=1-t(k)$, $k\in\mb{N}$, with $t:\mb{N}\rightarrow\mb{R}$ as in Definition \ref{def:t}.
In addition to the previous assumptions, assume that $\xsr$ coincides $\mb{P}^N$ almost-surely with the robust sampled equilibrium of a game obtained by eliminating all the constraints that are not of support.
\mbox{Then, for any agent $j\in\mcm$, any $\Delta$, $\mb{P}$}
\be
\label{eq:probabilisticxsr}
\begin{split}
&\mb{P}^N [V^j(\xsr)\le\varepsilon(\sstar)]\ge 1-\beta
,\\
&\mb{P}^N [V^j(\xsr)\le\varepsilon(n+M)]\ge 1-\beta
,
\end{split}
\ee
where $\sstar$ is the number support constraints of $\xsr$.
\end{itemize}
\end{corollary}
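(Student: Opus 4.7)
The plan is to reduce the corollary to \Cref{thm:mainresultQVI} by expressing the sampled robust equilibrium as the solution of a quasi variational inequality obtained through an epigraphic lift. First I would observe that, by Definition \ref{def:sre}, $\xsr$ is a Nash equilibrium of the deterministic game with cost functions $\Jjmax$ introduced in \eqref{eq:worstcosts}. Each $\Jjmax$ is the pointwise maximum of finitely many continuous functions that are convex in $\xj$, and so is itself continuous and convex in $\xj$; the first bullet (existence) then follows from standard fixed-point arguments for games with convex costs. For the second bullet I would invoke the set-valued variational characterization of the worst-case game: $\xsr$ is a Nash equilibrium if and only if there exists $g \in F(\xsr)$, with $F$ as in \eqref{eq:Fmax}, satisfying $g^\top (x - \xsr) \ge 0$ for all $x \in \X$; strong monotonicity of $F$ then rules out two distinct solutions by the usual monotonicity argument recalled in \cref{foot:SMON}.

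For the probabilistic bound, the key step is to turn the scenarios, which enter through the costs, into constraints via an epigraphic lifting. I would introduce auxiliary scalars $t^j \in \mb{R}$, stack $z^j \doteq (\xj, t^j) \in \mb{R}^{m+1}$, and exploit that $\xsr^j \in \argmin_{\xj \in \Xj} \Jjmax(\xj, \xsr^{-j})$ is equivalent to $(\xsr^j, \tsr^j)$ minimizing $t^j$ subject to $\xj \in \Xj$ and $\Jj(\xj, \xsr^{-j}; \delta_i) \le t^j$ for all $i \in \{1, \dots, N\}$, with $\tsr^j = \Jjmax(\xsr)$. Collecting the $M$ best responses yields a generalized Nash equilibrium problem whose constraint set for agent $j$ depends on $\xsr^{-j}$, and which is therefore naturally cast as a QVI in $z \in \mb{R}^{n+M}$, with constant operator selecting the $t^j$ components and constraint map $\mc{X}_{\delta_i}(z) \doteq \prod_{j=1}^M \{(\xj, t^j) : \xj \in \Xj,\; \Jj(\xj, x^{-j}; \delta_i) \le t^j\}$. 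A short calculation, using that $\xsr$ is a componentwise Nash equilibrium of the worst-case game, verifies that $(\xsr, \tsr)$ solves this QVI, and uniqueness of $\xsr$ carries over to $(\xsr, \tsr)$ through the epigraphic constraints.

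At this point \Cref{thm:mainresultQVI} applies. The sets $\mc{X}_{\delta_i}(z)$ are convex thanks to convexity of $\Jj$ in $\xj$; the support constraints of the lifted QVI are exactly the scenarios $\delta_i$ whose removal changes $(\xsr, \tsr)$, so the non-degeneracy assumption stated in the Corollary translates verbatim into that of \Cref{thm:mainresultQVI}. Applying that theorem yields $\mb{P}^N[V((\xsr, \tsr)) \le \varepsilon(\sstar)] \ge 1-\beta$, together with the a-priori $\sstar \le n+M$ under convexity. I would close the argument by comparing risks: the QVI risk $V((\xsr, \tsr)) = \mb{P}\{\delta : \exists j,\; \Jj(\xsr; \delta) > \tsr^j\}$ upper bounds each per-agent risk $V^j(\xsr) = \mb{P}\{\delta : \Jj(\xsr; \delta) \ge \Jjmax(\xsr)\}$ (the strict and non-strict events coincide up to $\mb{P}$-null sets because $\Jjmax(\xsr) = \tsr^j$), delivering both inequalities in \eqref{eq:probabilisticxsr}.

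The main obstacle I anticipate is the epigraphic lifting itself: the auxiliary variables must be engineered so that (i) the resulting QVI is well-posed with $(\xsr, \tsr)$ as its unique solution, (ii) each constraint set $\mc{X}_{\delta_i}(z)$ remains convex, and (iii) support scenarios for the original sampled robust equilibrium correspond to support constraints for the lifted QVI, so that both the support-count and the non-degeneracy hypotheses of \Cref{thm:mainresultQVI} apply without loss. Once this lifting is in place, the probabilistic content of the corollary follows essentially for free from \Cref{thm:mainresultQVI}.
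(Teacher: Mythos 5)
Your proposal follows essentially the same route as the paper's own proof: existence of $\xsr$ via the Debreu--Glicksberg--Fan fixed-point theorem applied to the worst-case costs $\Jjmax$, uniqueness via the set-valued variational characterization and strong monotonicity of \eqref{eq:Fmax}, and the probabilistic bound via the same epigraphic lift to a generalized Nash equilibrium problem recast as a QVI in $(x,t)\in\mb{R}^{n+\N}$ with constant operator, followed by \Cref{thm:mainresultQVI} and the comparison $V^j(\xsr)\le V(\ysr)$. The construction of the lifted constraint maps, the transfer of uniqueness to $(\xsr,\tsr)$, and the final risk inequality all match the paper's argument, so the proposal is correct and not a genuinely different approach.
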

\begin{proof}
See the Appendix.	
\end{proof}
\Cref{cor:second} ensures that, for any given agent $j\in\mcm$, the probability of incurring a higher cost than $%
\Jjmax(\xsr)$ is bounded by $\varepsilon(\sstar)$, with high confidence. 
\section{An application to demand-response markets}
\label{sec:numerics}
In this section, we consider a demand response scheme where electricity scheduling happens 24-hours ahead of time, agents are risk-averse and self-interested.
Formally, given a population of agents $\mcm=\{1,\dots,\N\}$, agent $j\in\mcm$ is interested in the purchase of $\xj_t$ electricity-units at the discrete time $t\in\{1,\dots,T\}$, through a demand-response scheme. Agent $j\in\mcm$ is constrained in his choice to $\xj\in\mc{X}^j\subseteq\mb{R}^T_{\ge0}$  convex, as dictated by its energy requirements. Let $\sigma(x)=\sum_{j=1}^n \xj$ be the total consumption profile.
Given an inflexible demand profile $\md=[d_1,\dots,d_T]\in\mb{R}^{T}_{\ge0}$ corresponding to the non-shiftable loads, the cost incurred by each agent $j$ is given by its total electricity bill
\begin{equation}
\Jj(\xj,\sigma(x);\md)=\sum_{t=1}^T
(\alpha_t\sigma_t(x)+\beta_t d_t) \xj_t,
\label{eq:PEV_energy_bill}
\end{equation}
where we have assumed that, at time $t$, the unit-price of electricity  $c_t\sigma_t(x)+\beta_t d_t$ is a sole function of the shiftable load $\sigma_t(x)$ and of the inflexible demand $d_t$ (with $\alpha_t,\beta_t>0$), in the same spirit of \cite{ma2013decentralized,paccagnan2016aggregative}. In a realistic set-up, each agent has access to a history of previous profiles $\{\md_i\}_{i=1}^N$ (playing the role of $\{\delta_i\}_{i=1}^N$), which we assume to be independent samples from the probability space $(\Delta,\mc{F},\mb{P})$, though $\mb{P}$ is not known.
We model the agents as self-interested and \emph{risk-averse}, so that the notion of sampled robust equilibrium introduced in \Cref{def:sre} is well suited. Assumption \ref{ass:nashexists} is satisfied, while the operator $F$ defined in \eqref{eq:Fmax} is strongly monotone for every $N$ and tuple $(\md_1,\dots,\md_N)$.%
\footnote{This can be seen upon noticing that 
$
\Jjmax(x) = \sum_{t=1}^T(\alpha_t\sigma_t(x))\xj_t +\max_{i\in\{1,\dots,N\}} (B\md_i)^\top \xj,$ where $B=\rm{diag}(\beta_1,\dots,\beta_T)$.
Correspondingly, the operator $F$ is obtained as the sum of two contributions $F=F_1+F_2$. The operator $F_1$ is relative to a game with costs $\{\sum_{t=1}^T(\alpha_t\sigma_t(x))\xj_t\}_{j=1}^\N$, and $F_2$ is relative to a game with costs $\{\max_{i\in\{1,\dots,N\}} (B\md_i)^\top \xj\}_{j=1}^\N$. While $F_1$ has been shown to be strongly monotone in \cite[Lem 3.]{gentile2017nash}, $F_2$ is monotone as it is obtained stacking one after the other the subdifferentials of the convex functions $\{\max_{i\in\{1,\dots,N\}} (B\md_i)^\top \xj\}_{j=1}^\N$. Thus, $F$ is strongly monotone.
}
By \Cref{cor:second}, $\xsr$ exists and is unique. Additionally, under the non-degeneracy assumption, we inherit the probabilistic bounds  \eqref{eq:probabilisticxsr}, whose quality and relevance we aim to test in the following numerics.

We use California's winter daily consumption profiles (available at \cite{EIA19}), as samples of the inflexible demand $\{\md_i\}_{i=1}^N$, on top of which we imagine to deploy the demand-response scheme. In order to verify the quality of our bounds - and only for that reason - we fit a multidimensional Gaussian distribution $\mc{N}(\mu,\Sigma)$ to the data%
. \Cref{fig:samples} displays $100$ samples from the dataset \cite{EIA19} (left), and $100$ synthetic samples from the multidimensional Gaussian model (right). 
\newlength\figureheight 
\newlength\figurewidth 
\setlength\figureheight{3.5cm} 
\setlength\figurewidth{0.38\linewidth} 
\vspace*{-6mm}
\begin{figure}[h!]
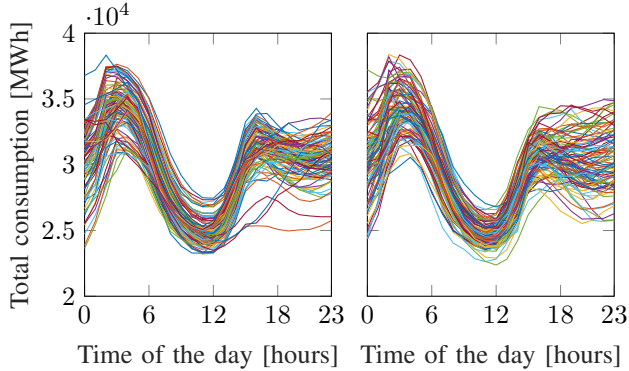

\input{real_samples.tikz}
\hspace*{-3mm}
\input{synthetic_samples.tikz}
\vspace*{-4mm}
\caption{Left: data samples from \cite{EIA19}. Right: synthetic samples $\sim\mc{N}(\mu,\Sigma)$.}
\label{fig:samples}
\vspace*{-3mm}
\end{figure}

We assume that the agents' constraint sets are given by $\mc{X}^j=\{\xj\in\mb{R}_{\ge0}^{24},~\text{s.t.}~ \sum_{t=1}^{24} x^j_t\ge \gamma^j\}$, where $\gamma^j$ is randomly generated according to a truncated gaussian distribution with mean $480$, standard deviation $120$ and $400\le \gamma^j\le 560$, all in MWh. We set $\alpha_t=\beta_t=500\$ / \rm{MWh}^2$, and consider $\N=100$ agents representing, for example, electricity aggregators. We limit ourselves to $N=500$ samples (i.e. a history of $500$ days) to make the example realistic. Since $n+\N=2500$, the a-priori bound in \eqref{eq:probabilisticxsr} is not useful. On the other hand, the values of $\sstar$ observed after extracting $\{\md_i\}_{i=1}^N$ from $\mc{N}(\mu,\Sigma)$ %
and computing the solution $\xsr$, are in the range $3\le \sstar\le 7$. Considering the specific instance with $\sstar=7$, and setting $\beta=10^{-6}$, the a-posteriori bound in \eqref{eq:probabilisticxsr} gives $V^j(\xsr)\le\varepsilon(7)=6.49\%$ %
for all agents, with a confidence of $0.999999$. Since the cost $\Jj(\xj,\sigma(x);\md)$ is linear in $\md$, and $\md\sim\mc{N}(\mu,\Sigma)$, it is possible to compute the risk at the solution $V^j(\xsr)$ in closed form, for each $j\in\mcm$. This calculation reveals that the highest risk over all the agents is $0.16\%\le6.49\%=\varepsilon(7)$, in accordance to \Cref{cor:second} (the lowest value is $0.11\%$).
\setlength\figureheight{3.5cm} 
\setlength\figurewidth{0.38\linewidth} 
\vspace*{-2mm}
\begin{figure}[h!]
\pgfmathdeclarefunction{gauss}{2}{%
  \pgfmathparse{1/(#2*sqrt(2*pi))*exp(-((x-#1)^2)/(2*#2^2))}%
}

\begin{tikzpicture}

\begin{axis}[%
width=\figurewidth,
height=\figureheight,
scale only axis,
domain=3.5e+4:4.5e+4, 
samples=200,
xmin=3.5e+4,
xmax=4.5e+4,
restrict x to domain*=3.5e+4:4.5e+4,
restrict y to domain*=0:1e-3,
xlabel style={font=\color{white!15!black}},
xlabel={Cost [$\$$]},
ymin=0,
ymax=0.0005,
ylabel style={font=\color{white!15!black}},
ylabel={},
legend entries={\footnotesize cost distribution, \footnotesize 
                $\Jjmax(\xsr)$},
legend pos=north west,
legend style={fill=none, draw=none},
legend cell align=left
]
\addplot [very thick, blue] {gauss(39842, 1260.5)};
\addplot [very thick, dashed, color=black]
  table[row sep=crcr]{%
43563.4 	0\\
43563.4 0.000331\\};
\end{axis}
\end{tikzpicture}%
\hspace*{2mm}
\begin{tikzpicture}

\begin{axis}[%
width=\figurewidth,
height=\figureheight,
at={(1.011in,0.642in)},
scale only axis,
xtick={0,6,12,18,23},
xmin=0,
xmax=23,
xlabel style={font=\color{white!15!black}},
xlabel={Time of the day [hours]},
ymin=23000,
ymax=41000,
ylabel style={font=\color{white!15!black}},
ylabel={Total consumption [MWh]},
ylabel style={yshift=-0.4cm}, 
legend style={fill=none, draw=none},
legend cell align=left,
legend pos=north west,
]
\addplot [color=blue, very thick]
  table[row sep=crcr]{%
0	31286.0227814335\\
1	31194.5805369181\\
2	33289.3221476516\\
3	34277.9597315441\\
4	34030.8422818797\\
5	33015.2348993296\\
6	31224.691275172\\
7	31257.7058834469\\
8	31459.2366510213\\
9	31465.767518339\\
10	31725.7181943825\\
11	31837.0420214592\\
12	32065.0589893876\\
13	32151.9672325354\\
14	32122.3307065224\\
15	31482.6806185923\\
16	31012.4563758417\\
17	30900.0805369156\\
18	30662.221476514\\
19	30426.362416113\\
20	30254.0900685837\\
21	30389.3663423973\\
22	30675.8881666974\\
23	31054.1881661025\\
};
\addlegendentry{\footnotesize$\mu+\sigma(\xsr)$}

\addplot [color=black, very thick]
  table[row sep=crcr]{%
0	30077.4060402685\\
1	31194.5805369128\\
2	33289.322147651\\
3	34277.9597315436\\
4	34030.8422818792\\
5	33015.2348993289\\
6	31224.6912751678\\
7	29007.3959731544\\
8	27117.1677852349\\
9	25878.6208053691\\
10	25099.8859060403\\
11	24701.7080536913\\
12	24788.0872483221\\
13	25694.0436241611\\
14	27653.567114094\\
15	30051.9228187919\\
16	31012.4563758389\\
17	30900.0805369128\\
18	30662.2214765101\\
19	30426.3624161074\\
20	30175.9765100671\\
21	30104.855704698\\
22	30248.8691275168\\
23	30458.3993288591\\
};
\addlegendentry{\footnotesize$\mu$}
\end{axis}
\end{tikzpicture}%
\vspace*{-3mm}
\caption{Left: cost distribution for the agent with the highest risk. Right: sum of average inflexible demand $\mu=\mb{E}[\md]$, and flexible demand $\sigma(\xsr)$.}
\label{fig:distribution}
\end{figure}

\noindent
 \Cref{fig:distribution} (left) shows the distributions of the cost for the agent with the highest risk.
  \Cref{fig:distribution} (right) shows the sum of the average inflexible demand $\mu$, and the flexible demand $\sigma(\xsr)$.
The difference between $\varepsilon(7)=6.49\%$ and $0.11\%\le V^j(\xsr)\le 0.16\%$, $j\in\mcm$ is partly motivated, by the request that the bound $V^j(\xsr)\le\varepsilon(7)=6.49\%$ holds true for very high confidence $0.999999$.
While an additional source of conservativism  
might be ascribed, at first sight, to having used $V(\xsr)\le \varepsilon(\sstar)$ to derive  $V^j(\xsr)\le \varepsilon(\sstar)$ (see the proof of \Cref{cor:second} in Appendix), this is not the case relative to the setup under consideration. Indeed, Monte Carlo simulations show that  $V(\xsr)\approx 0.17\%$, comparably with $V^j(\xsr)$. In other words, a realization that renders $\xsr$ unfeasible for agent $j$ is also likely to make $\xsr$ unfeasible for agent $l\neq j$. 

\section{Conclusion}
In this manuscript, we aimed at unleashing the power of the scenario approach to the rich class of problems described by variational and quasi variational inequalities. %
As fundamental contribution, we provided a-priori and a-posteriori bounds on the probability that the solution of \eqref{eq:mainproblem} or \eqref{eq:QVI} remains feasible against unseen realizations. We then showed how to leverage these results in the context of uncertain game theory. While this work paves the way for the application of the scenario approach to a broader class of real-world applications, it also  %
generates novel and unexplored research questions.
An example that warrants further attention is that of \emph{tightly} bounding the risk incurred by individual players, when taking data-driven decisions in multi-agent systems.
\appendices
\section{Proof of \Cref{thm:mainresult}}
\label{app:proofmain}
\begin{proof}
The proof of point (i) in \Cref{thm:mainresult} follows the same lines as the proof of \cite[Thm. 4]{campi2018wait} and we here indicate which modifications are needed in the present context.

In particular, the result in \cite[Thm. 4]{campi2018wait} is similar to that presented here in point (i) of \Cref{thm:mainresult}, except that the former refers to an optimization program (see equation (30) in \cite{campi2018wait}) rather than to a variational inequality. More in details, the proof of \cite[Thm. 4]{campi2018wait} follows from that of \cite[Thm. 2 and 3]{campi2018wait}.
Both \cite[Thm. 2 and 3]{campi2018wait} 
hinge upon the result in \cite[Thm. 1]{campi2018wait}. Since all these links survive without any modifications in the present context of VI, any difference must be traced back to the proof of  \cite[Thm. 1]{campi2018wait}.

Turning to the proof of this latter theorem, one sees that it is centered around showing the following fundamental result
\be
\mb{P}^N\{V(\xstar)\ge\varepsilon(\sstar)\}=\sum_{k=0}^n \binom{N}{k}
\int_{(\varepsilon(k),1]}(1-v)^{N-k}dF_k(v),
\label{eq:decompose}
\ee
where $F_k(v)$ is the probability that the tuple $(\delta_1,\dots,\delta_k)$ corresponds to constraints $\{\X_{\delta_i}\}_{i=1}^k$ that are all of support and $V(\xstar_k)\le v$, where $\xstar_k$ denotes the solution obtained from $\{\X_{\delta_i}\}_{i=1}^k$.
This result is originally proven by showing that two events (indicated with $A$ and $B$ in the proof of \cite[Thm. 1]{campi2018wait}) coincide up to a zero probability set. To this purpose, one uses the fact that the solution $x^\star_k$ is feasible for all constraints (a fact that remains valid in our present context when the solution of a VI problem is considered that only contains the first $k$ constraints), and that the support constraints alone return with probability $1$ the same solution that is obtained with all constraints (the so-called non degeneracy condition, which is also valid in the context of the VI problem owing to Assumption \ref{ass:nondegeneracy}). 
Thus, the proof that $A = B$ almost-surely, as presented in \cite{campi2018wait}, does not directly use the fact that $x^\star_k$ is the solution of an optimization program. Instead,  it only uses conditions that hold true also in the present context. Hence, the conclusion extends to the class of variational inequalities.

After having shown that $A = B$ up to a zero probability set, the result in \eqref{eq:decompose} follows using exactly the same argument as that used in \cite{campi2018wait}. The rest of the proof goes through unaltered in the VI context as in \cite[Thm. 1]{campi2018wait}.

\vspace*{\myspace}
Consider now point (ii). We prove that $s^\star \leq n$, from which the result follows from point (i) of the theorem because the function $\varepsilon(k)$ is increasing (see \cite{campi2018wait}). To show that $s^\star \leq n$, let $c = F(x^\star)$, where $\xstar$ is the unique solution to \eqref{eq:mainproblem}, and consider the following optimization problem:
\be
\begin{split}
\label{sample-optimization}
&\min\,c^\top x \nonumber\\
&\,\text{s.t.: } x \in \bigcap_{i=1}^N \mathcal{X}_{\delta_i}. 
\end{split}
\ee
Clearly, $x^\star$ coincides with the solution of \eqref{sample-optimization}, see for example \cite[Eq. 1.3.6]{facchinei2007finite}. Since \cite{calafiore2005uncertain} shows that problem \eqref{sample-optimization} has at most $n$ support constraints, in the following we prove that the support constraints for \eqref{eq:mainproblem} are contained among the support constraints of \eqref{sample-optimization}, from which the result follows. Towards this goal, let us consider a constraint that is not support for \eqref{sample-optimization}. If we remove such constraint, the solution of \eqref{sample-optimization} does not change, which implies that $c^T x \ge c^T x^\star, \forall x \in \tilde{\mathcal{X}}$, where $\tilde{\mathcal{X}}$ is the intersection of all constraints but the removed one; this relation is the same as $F(x^\star)^T (x - x^\star) \ge 0, \forall x \in \tilde{\mathcal{X}}$, showing that $x^\star$ remains the solution of \eqref{eq:mainproblem}, so that the removed constraint is not of support for \eqref{eq:mainproblem}.\footnote{Notice that, differently from problem \eqref{sample-optimization} where the gradient of the cost function is fixed, $F(x)$ depends on $x$ so that removal of a constraint which is not active at $x^\star$ may in principle enlarge the domain and make feasible a point for which a new $\bar{x}^\star$ is such that $F(\bar{x}^\star)^T (x - \bar{x}^\star) \ge 0, \forall x \in \tilde{\mathcal{X}}$. This circumstance, however, is ruled out by the uniqueness requirement in the Standing Assumption.} 
Thus, the set of support constraints of \eqref{eq:mainproblem} is contained in that of \eqref{sample-optimization}. 
\end{proof}
\section*{Proof of \Cref{prop:NEandVI}}
\begin{proof}
The necessary and sufficient condition presented in \cite[Prop. 1.4.2]{facchinei2007finite} for $\xne$ to be a Nash equilibrium coincides with \eqref{eq:mainproblem} where $F$ and $\mc{X}$ are given in \eqref{eq:defFX}, upon noticing that  
\[\small
\begin{split}
	\X^1\times\dots\times \X^{\N}
	&=
\left(\bigcap_{i=1}^N \X_{\delta_i}^1
\right) 
\times
 \left(\bigcap_{i=1}^N \X_{\delta_i}^2
 \right) 
 \times
 \dots 
 \times 
 \left(
 \bigcap_{i=1}^N \X_{\delta_i}^\N
  \right) \\
  &= 
 \bigcap_{i=1}^N \left(\X_{\delta_i}^1\times\dots\times
 \X_{\delta_i}^\N\right)
 =
 \bigcap_{i=1}^N \X_{\delta_i}
 =
 \mc{X}.
\end{split}
\]	
\end{proof}
\section*{Proof of \Cref{cor:first}}
\begin{proof}
\emph{First claim:} thanks to \Cref{prop:NEandVI}, $\xne$ is a Nash equilibrium if and only if it solves \eqref{eq:mainproblem} with $F$ and $\mc{X}$ defined as in \eqref{eq:defFX}. The result follows by applying the existence result of \Cref{prop:suffexun} to the above mentioned variational inequality.
\emph{Second claim:} The result follows thanks to the uniqueness result of \Cref{prop:suffexun} applied to \eqref{eq:mainproblem}.
\emph{Third claim:} this is a direct application of \Cref{thm:mainresult}.
\end{proof}
\section*{Proof of \Cref{cor:second}}
\begin{proof}
Before proceeding with the proof, we recall that $\xsr$ can be regarded as a Nash equilibrium with respect to the costs $\{\Jjmax\}_{j=1}^\N$ defined in \eqref{eq:worstcosts}. 
Thus, in the following we will prove the required statement in relation to $\xsr$ a Nash equilibrium of the game with agents's set $\mcm$, constraint sets $\{\mc{X}^j\}_{j=1}^\N$, and utilities $\{\Jjmax\}_{j=1}^\N$.

\emph{First claim:}  
For each fixed $j\in\mcm$, $\Jjmax$ is continuous in $x$, and convex in $\xj$ for fixed $x^{-j}$, since it is the maximum of a list of functions that are continuous in $x$ and convex in $\xj$. Additionally, each set in $\{\mc{X}^j\}_{j=1}^\N$ is non-empty closed convex.
 Hence, the claim follows by a direct application of the existence result of Debreu, Glicksber, Fan \cite[Thm. 1.2]{91fudenbergtirole}.

\emph{Second claim:}
In parallel to the result of \Cref{prop:NEandVI}, it is not difficult to prove that, under the given assumptions,  $\xsr$ is a sampled robust equilibrium if and only if it satisfies the variational inequality
$F(\xsr)^\top (x-\xsr)\ge0$, $\forall x\in \mc{X}^1\times...\times\mc{X}^\N$, where $F$ is given in \eqref{eq:Fmax}. This can be shown by extending the result of \cite[Prop. 1.4.2]{facchinei2007finite} to the case of subdifferentiable functions $\{\Jjmax\}_{j=1}^\N$, using for example the non-smooth minimum principle of \cite[Thm. 2.1.1]{konnov2001combined}. Uniqueness is then guaranteed by the strong monotonicity of the operator $F$, see \cite[Prop. 2.1.5]{konnov2001combined}.

\emph{Third claim:} in order to show the desired result, we will use an epigraphic reformulation of each agent's optimization problem. %
For ease of presentation, we show the result with $\mc{X}^j=
\mb{R}^m$, but the extension is immediate.

By definition, $\xsr$ is a sampled robust equilibrium if and only if, for each $j\in\mcm$, it holds
$
\xsr^j\in\argmin_{\xj} \Jjmax(\xj,\xsr^{-j}).
$
The latter condition is equivalent to requiring the existence of $\tsr=(\tsr^1,\dots,\tsr^\N)\in\mb{R}^{\N}$ s.t.
\be
\label{eq:GNE}
\begin{split}
(\xsr^j,\tsr^j)\in &\argmin_{(\xj,t^j)}~t^j\\
&~\text{s.t.}~~
t^j\ge \Jj(\xj,\xsr^{-j};\delta_i)\quad\forall\delta_i\in\Delta
\end{split}
\ee
for all $j\in\mcm$, where we have used an epigraphic reformulation of the original problem, and the definition of $\Jjmax$.
Equation \eqref{eq:GNE} can be interpreted as the \emph{generalized Nash equilibrium} condition for a game with $\N$ agents, decision variables $y^j=(x^j,t^j)$, and cost functions  $\{t^j\}_{j=1}^\N$, where each agent $\j\in\mcm$ is given the feasible set 
\[\begin{split}
\mc{Y}^j(y^{-j})&= \bigcap_{i=1}^N\mc{Y}_{\delta_i}^j(y^{-j}),\\
\mc{Y}_{\delta_i}^j(y^{-j})
&\doteq
\{(x^j,t^j)~\text{s.t.}~\Jj(\xj,x^{-j};\delta^i)-t^j\le 0\},
\end{split}
\] 
see \cite[Eq. 1]{facchinei2007generalized}. 
Observe that the cost functions $\{t^j\}_{\j=1}^\N$ are smooth and convex, while each set $\mc{Y}^j(y^{-j})$ is closed convex, for fixed $y^{-j}$ due to the convexity assumptions on $\Jj(\xj,x^{-j};\delta)$. %
Thanks to \cite[Thm. 2]{facchinei2007generalized}, $(\xsr,\tsr)$ is equivalently characterized as the solution of a quasi variational inequality of the form \eqref{eq:QVI} in the augmented space $y=(x,t)\in\mb{R}^{n+\N}$, where 
\[
\begin{split}
F(y)&=
\ones{\N} \otimes
\begin{bmatrix}
\zeros{m}\\
1
\end{bmatrix},\\
\mc{Y}&=\mc{Y}^1(y^{-1})\times\dots\times\mc{Y}^\N(y^{-\N})\\
&=\bigcap_{i=1}^N
\left(
\mc{Y}_{\delta_i}^1(y^{-1})\times\dots\times
\mc{Y}_{\delta_i}^\N(y^{-\N})
\right) = \bigcap_{i=1}^N \mc{Y}_{\delta_i}(y)\,.
\end{split}
\]
Since the latter QVI fully characterizes $\ysr=(\xsr,\tsr)$, and since $\ysr$ is unique (because $\xsr$ is so), the solution to the latter QVI must be unique. Thus, \Cref{thm:mainresultQVI} applies,  and we get
\[
\mb{P}^N [V(\ysr)\le\varepsilon(\sstar)]\ge 1-\beta
~\text{with}~
\varepsilon(\sstar) \doteq 1- t(\sstar),
\]
where $\sstar$ is the number of support constraints of the QVI.
Thanks to \Cref{def:riskQVI} and to the fact that $\tsr^j = \Jjmax(\xsr)$, it is possible to express $V(\ysr)$ as
\[\small
V(\ysr)=\mb{P}\{\delta\in\Delta~\text{s.t.}~ \Jj(\xsr;\delta)\ge \Jjmax(\xsr) \text{ for some }j\}\,.
\]
Thanks to \Cref{def:agentrisk}, it must be $V^j(\xsr)\le V(\ysr)$ from which the claim follows.
\end{proof}
\bibliographystyle{IEEEtran}
\bibliography{biblioVI.bib}
\end{document}